\newtheorem{example}{Example}
\newtheorem{thm}{\bf Theorem} [section]
\newtheorem{lem}[thm]{Lemma}
\newtheorem{prop}[thm]{Proposition}
\newtheorem{rem}[thm]{Remark}
\numberwithin{equation}{section}
 \numberwithin{Lem}{section}
 \numberwithin{Defi}{section}
 \numberwithin{Theo}{section}
 \numberwithin{Rem}{section}
  \numberwithin{Coro}{section}
  \numberwithin{Fig}{section}
\title{Onsager-Machlup action functional for stochastic partial differential equations with L\'{e}vy noise
\footnotetext{\\
Jianyu Hu\\
School of Mathematics and Statistics \& Center for Mathematical Sciences \& Hubei National Center for Applied Mathematics, Huazhong University of Science and Technology, Wuhan
430074, China.\\
E-mail: jianyuhu@hust.edu.cn}
\footnotetext{\\Jinqiao Duan\\
Department of Applied Mathematics, Illinois Institute of Technology, Chicago, IL 60616,
USA.\\
E-mail: duan@iit.edu\\
$^*$ Corresponding author}}
\author{Jianyu Hu and Jinqiao Duan$^{*}$}
\begin{document}

\maketitle

\begin{abstract}
This work is devoted to deriving the Onsager-Machlup action functional for stochastic partial differential equations with (non-Gaussian) L\'{e}vy process as well as Gaussian Brownian motion. This is achieved by applying the Girsanov transformation for probability measures and then by a path representation. This enables the investigation of the most probable transition path for  infinite dimensional stochastic dynamical systems modeled by stochastic partial differential equations, by minimizing the Onsager-Machlup action functional.

Key words: Onsager-Machlup action functional, stochastic partial differential equations, L\'{e}vy process, Girsanov transformation, most probable transition path.

\end{abstract}

\section{Introduction}
Stochastic dynamical systems are mathematical models for complex phenomena in physical, chemical and biological sciences \cite{arnold2013random,duan2015introduction,duan2014effective,imkeller2012stochastic}. In contrast to the deterministic systems, noisy fluctuations result in the possibility of transitions between two metastable states \cite{ditlevsen1999observation,qiu2000kuroshio}. The Onsager-Machlup action functional is an essential tool to study these transition phenomena. We will derive the Onsager-Machlup action functional for stochastic partial differential equations in Hilbert space with non-Gaussian L\'{e}vy process and this enables the study of the most probable transition pathway between two metastable states.

Onsager and Machlup considered the probability of paths of a diffusion process \cite{onsager1953fluctuations}. For stochastic differential equations, the Onsager-Machlup action functional has been widely investigated. A rigorous mathematical derivation was carried out by Ikeda and Watanabe \cite{ikeda2014stochastic}, Takahashi and Watanabe \cite{takahashi1981probability}, and Fujita and Kotani \cite{fujita1982onsager}. Shepp and Zeitouni \cite{shepp1992note} derived the Onsager-Machlup action functional in the Cameron-Martin space rather than twice continuously differentiable space. Capitaine \cite{capitaine1995onsager,capitaine2000onsager} derived the Onsager-Machlup action functional in the space of mean-square integrable functions. Hara and Takahashi \cite{hara2016stochastic} further derived Onsager-Machlup action functional for Brownian motion on a complete Riemannian manifold using a purely probabilistic method. The key point is to express the transition probability of a diffusion process in terms of a functional integral over paths of the process. Note that in the case of finite dimensional diffusions, the Onsager-Machlup action functional does not depend on the norm considered.

As in D\"{u}rr and Bach \cite{durr1978onsager}, Onsager-Machlup action functional may be regarded as a Lagrangian for determining the most probable path of a diffusion process by a variation principle. In quantum mechanics, Hochberg et.al \cite{hochberg1999effective} derived the effective action for stochastic partial differential equations using the path integral method. Cooper and Dawson \cite{cooper2016auxiliary} introduced an auxiliary field to obtain the effective action for the KPZ equation. This effective action is actually Onsager-Machlup action functional which is different from the classical action. Thus, the Onsager-Machlup action functional serves as a tool to study dynamical behaviors of stochastic dynamical systems, especially the most probable transition paths connecting metastable states. Using the Onsager-Machlup action functional, Yang et.al \cite{zheng2020maximum} studied the climate change for global warming. Huang et.al \cite{huang2020estimating} estimated the lower and upper bounds for the most probable transition time by minimizing the Onsager-Machlup action functional. In data science, it is possible to quantify most probable transition trajectories from the Onsager-Machlup action functional \cite{li2020machine,ren2020identifying}. There are other works studying the transition phenomena but using the Freidlin-Wentzell action functional via large deviation theory \cite{bouchet2014langevin,freidlin2012random,zhang2019transition}. We emphasize that large deviation theory only holds for small noise, but the Onsager-Machlup action functional is for any noise.

Note that most existing works on Onsager-Machlup action functional are for systems with Brownian motion. However, non-Gaussian random phenomena including heavy-tailed distributions have been found to be suitable in modeling biological evolution, climate change and other complex scientific and engineering systems \cite{bottcher2013levy,hao2014asymmetric,jourdain2012levy,zheng2016transitions}. As for deriving the Onsager-Machlup action functional for systems with non-Gaussion noise,  there are a few attempts. Bardina, Rovira and Tindel \cite{bardina2002asymptotic} dealt with the asymptotic evaluation of the Poisson measure for a tube. More recently, Chao and Duan \cite{chao2019onsager} used the Girsanov transformation to absorb the drift term and thus derive the Onsager-Machlup action functional in one dimensional systems with L\'{e}vy process.

For infinite dimensional stochastic systems, the results for Onsager-Machlup action functional are much fewer. The first result has been obtained by Dembo and Zeitouni\cite{dembo1991onsager} for trace class elliptic stochastic partial differential equations on a bounded domain of $\mathbb{R}^d$, and then by Mayer-Wolf and Zeitouni \cite{wolf1993onsager} in the non-trace case. Bardina, Rovira and Tindel\cite{bardina2003onsager} derived the Onsager-Machlup action functional for stochastic evolution equations in Hilbert space. But for infinite dimensional systems with L\'{e}vy process, as far as we know, the Onsager-Machlup action functional is not yet derived. This paper solves the problem for a class of infinite dimensional systems with L\'{e}vy process.

One difficulty in deriving Onsager-Machlup action functional for infinite dimensional systems is that the normalizing factor cannot be a function of the norm of the noise, unlike in finite dimensional systems. Moreover, the L\'{e}vy process is much more complex (Brownian motion is a special case of L\'{e}vy process) which is hard to handle. The methods in \cite{bardina2003onsager,dembo1991onsager,wolf1993onsager} encouter some new difficulties here, because the random variables represented by Karhunen-Lo\`{e}ve expansion are not independent. Inspired by the works \cite{chao2019onsager, durr1978onsager}, we are able to overcome these difficulties by a path representation and derive the Onsager-Machlup action functional for  stochastic partial differential equations with L\'{e}vy process as well as Brownian motion.

This paper is organized as follows. In section 2, we present the L\'{e}vy process and stochastic partial differential equation in Hilbert space and define the Onsager-Machlup action functional. In section 3, we apply the Girsanov transformation to obtain the relation between the two probability measures and then derive the Onsager-Machlup action functional by representing the  Radon-Nikodym derivative in terms of a path integral. We present an example in section 4, and finally end with some discussions in section 5.

\section{The framework}
We now recall some basic facts about L\'{e}vy process in Hilbert space, then introduce a class of stochastic partial differential equations \cite{applebaum2009levy,da2014stochastic,peszat2007stochastic}, and finally define the Onsager-Machlup action functional.
\subsection{L\'{e}vy process in Hilbert space}
Let $(\Omega,\mathcal{F},\mathbb{P})$ be a probability space with a filtration of right continuous  $\sigma$-algebras $(\mathcal{F}_t)_{t\geq0}$. All of the $\sigma$-algebras are assumed to be $\mathbb{P}$-completed. Given a real separable Hilbert space $H$, a L\'{e}vy process on $H$ can be defined by the formal series
\begin{equation}
\begin{split}
L(t)=\sum\limits_{j=1}^{\infty}L_j(t)e_j,
\end{split}
\end{equation}
where $\{e_j;j\geq1\}$ is an orthonormal basis in $H$, $\{L_j(t),t\in[0,1],j\geq1\}$ is a sequence of mutually independent L\'{e}vy processes in $\mathbb{R}$ adapted to $\mathcal{F}_t$. Assume that each $L_j(t)$ is a pure jump process which has the L\'{e}vy-Khinchin representation
\begin{align*}
\mathbb{E}e^{i\xi L_j(t)}=e^{-t\psi_j(\xi)},\ \ 0\leq t\leq1,\ \xi\in \mathbb{R},
\end{align*}
where
\begin{align*}
\psi_j(\xi)=\int_{\mathbb{R}}(1-e^{i\xi z}-\chi_{\{|z|<1\}}i\xi z)\nu_j(dz).
\end{align*}
Here, $\nu_j$ is the jump measure in the jth component. Then we have $\sum\limits_{j}\int_{\mathbb{R}}(|z|^2\wedge 1)\nu_j(dz)<\infty$. In this case, the series in (2.1) converges in probability, uniformly in $t$ on every compact interval $[0,1]$. Moreover, it can be expressed by the L\'{e}vy-It\^{o} decomposition
\begin{equation}
\begin{split}
L_j(t)=\int_{|z|<1}z \tilde{N}_j(t,dz)+\int_{|z|\geq1}z N_j(t,dz),
\end{split}
\end{equation}
where $N_j(dt,dz)$ is the Poisson measure associated with $L_j(t)$, and $\tilde{N}_j(dt,dz)=N_j(dt,dz)-\nu_j(dz)dt$ is the corresponding compensated Poisson random measure. In this paper, our L\'{e}vy process $L(t)$ has the following expression
\begin{equation}
\begin{split}
L(t)=\sum\limits_{j=1}^{\infty}L_j(t)=\sum\limits_{j=1}^{\infty}\int_{|z|<1}z\tilde{N}_j(t,dz)e_j+\sum\limits_{j=1}^{\infty}\int_{|z|\geq1}zN_j(t,dz)e_j.
\end{split}
\end{equation}

The following examples are of some important applications; see Cont and Tankov \cite{cont1975financial}.
\begin{example}(One sided exponentially tempered stable L\'{e}vy process)

As the L\'{e}vy jump measure $\nu_j$ of $L_j$, we take
\begin{equation}
\begin{split}
\nu_j(dr)=c_j\frac{e^{-\beta_jr}}{r^{1+\alpha_j}}\chi_{(0,\infty)}(r)dr.
\end{split}
\end{equation}
Then $L=\sum_jL_je_j$ is $H$-valued and square integrable if and only if $\sum_j\frac{c_j}{\beta_j^{2-\alpha_j}}<\infty$ for all $\alpha_j\in (0,2)$.
\end{example}
\begin{example}(Two sided exponentially tempered stable L\'{e}vy process)

As the L\'{e}vy jump measure $\nu_j$ of $L_j$, we take
\begin{equation}
\begin{split}
\nu_j(dr)=c_j^{-}\frac{e^{-\beta_j^{-}|r|}}{r^{1+\alpha_j^{-}}}\chi_{(\infty,0)}(r)dr+c_j^{+}\frac{e^{-\beta_j^{+}|r|}}{r^{1+\alpha_j^{+}}}\chi_{(0,\infty)}(r)dr.
\end{split}
\end{equation}
Then $L=\sum_jL_je_j$ is $H$-valued and square integrable if and only if $\sum_j(\frac{c_j^{-}}{(\beta_j^{-})^{2-\alpha_j^{-}}}+\frac{c_j^{+}}{(\beta_j^{+})^{2-\alpha_j^{+}}})<\infty$ for all $\alpha_j\in (0,2)$.
\end{example}

\subsection{Stochastic partial differential equations}
Consider the following stochastic partial differential equation in $H$:
\begin{equation}
\begin{split}
&dX(t)=[AX(t)+F(t,X(t))]dt+BdW(t)+ dL(t), t\in[0,1],\\
&X(0)=x \in H,
\end{split}
\end{equation}
where $A:D(A)\subset H\rightarrow H$ generates a $C_0$-semigroup $\{e^{tA};t\geq 0\}$, $F$ is a Lipschitz function defined on $[0,1]\times H$, $B$ is non-negative bounded linear operators from $H$ to $H$, $W(t)$ is a cylindrical Wiener process in $H$, and $L(t)$ is a L\'{e}vy process in $H$ defined as (2.3). We will also denote by $X^A$ the stochastic convolution of $A$ by $X$, that is, the solution to (2.1) with $F = 0$ and $x = 0$.

The norm in $H$ will be denoted by $|\cdot|_H$, and the scalar product by $\langle\cdot,\cdot\rangle$. The $L^2$ norm in $L^2([0,1];H)$ is denoted by $\|\cdot\|_2$. Let $\mathcal{L}(H)$ be the set of bounded linear operators on $H$. The norm $\|\cdot\|$ will be the usual operator norm defined on $\mathcal{L}(H)$. We shall make the following hypotheses:

$\mathbf{(H1)(Semigroup)}$ The operator A generates a self adjoint $C_0$-semigroup $\{e^{tA};t\geq0\}$ of negative type. Moreover, there exists a complete orthonormal system $\{e_j;j\geq1\}$ which diagonalizes $A$. We shall denote by $\{-\lambda_j;j\geq1\}$ the corresponding
set of eigenvalues and we assume that $\{\lambda_j;j\geq1\}$ is an increasing sequence of real numbers such that $\lambda_j>0$ and $\lim_{j\rightarrow\infty}\lambda_j=\infty$. Moreover, the semigroup $\{e^{tA};t\geq0\}$ is a contraction.

$\mathbf{(H2)(Lipschitz\ and\ Linear\ Growth)}$ There exists an integrable function $L_F:(0,1]\rightarrow(0,\infty)$ such that for all $s$ with $0<s\leq1$ and $x,y\in H$,
\begin{equation}
\begin{split}
&|e^{sA}(F(t,x)-F(t,y))|_H\leq L_F(s)|x-y|_H,\  t\in[0,1],\\
&|e^{sA}F(t,x)|_H\leq L_F(s)(1+|x|_H),\  t\in[0,1].
\end{split}
\end{equation}

$\mathbf{(H3)(Hilbert-Schmidt)}$ There exists an integrable function $L_B:(0,1]\rightarrow(0,\infty)$ such that for all $s$ with $0<s\leq1$ and $x,y\in H$,
\begin{equation}
\begin{split}
&|e^{sA}B(x-y)|_{HS}\leq L_B(s)|x-y|_H,\  t\in[0,1],\\
&|e^{sA}Bx|_{HS}\leq L_B(s)(1+|x|_H),\  t\in[0,1].
\end{split}
\end{equation}

$\mathbf{(H4)(Moment)}$ The L\'{e}vy jump measure satisfies
\begin{equation}
\begin{split}
\sum\limits_{j=1}^{\infty}\int_{|z|<1}|z|\nu_j(dz)<\infty.
\end{split}
\end{equation}

\begin{rem}
The contraction of the $C_0$-semigroup $e^{tA}$ guarantees that  the solution of (2.6) has a c\`{a}dl\`{a}g version. The hypotheses $\mathbf{H2}$ and $\mathbf{H3}$ are usual conditions which ensure the existence and uniqueness of a solution to (2.6). There are many works on the existence and uniqueness of a solution to (2.6), as in \cite{brzezniak2014strong,peszat2007stochastic,qiao2017path,sun2019pathwise}
\end{rem}

\begin{rem}
The hypothesis $\mathbf{H4}$ is very important. On the one hand, it ensures the availability of  Girsanov's transformation. On the other hand, it guarantees that the process $\int_0^t\int_{|y|_H<1}\exp((t-s)A)y \tilde{N}(dt,dy)+\int_0^t\int_{|y|_H\geq1}\exp((t-s)A)y N(dt,dy)$ is finite variation. These will be shown latter.
\end{rem}

Under the  hypotheses $\mathbf{H1,H2}$ and $\mathbf{H3}$, there exists a unique mild solution to (2.6). We say $X=\{X(t),t\in[0,1]\}$ is a mild solution to (2.6) if it is an $H$-valued $\mathcal{F}_t$ -adapted square integrable process such that
\begin{equation}
\begin{split}
X(t)&=\exp(tA)x+\int_0^t\exp((t-s)A)F(s,X(s))ds+\int_0^t\exp((t-s)A)BdW(s)\\
&+\int_0^t\int_{|y|_H<1}\exp((t-s)A)y\tilde{N}(ds,dy)+\int_0^t\int_{|y|_H\geq1}\exp((t-s)A)yN(ds,dy),
\end{split}
\end{equation}
for all $t\in[0,1]$.

The following proposition is a special case of \cite[Theorem 9.9]{peszat2007stochastic}.
\begin{prop}
Suppose that $\mathbf{H1,H2}$ and $\mathbf{H3}$ are satisfied. Then there exists a unique solution X(t) to (2.8) such that $X\in L^2(\Omega\times[0,1];H)$. Furthermore, the solution has a c\`{a}dl\`{a}g version.
\end{prop}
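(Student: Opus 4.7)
The plan is to apply the Banach fixed point theorem to the nonlinear operator $\mathcal{J}$ sending an $\mathcal{F}_t$-adapted process $Y$ to the right-hand side of (2.8) (with $Y$ in place of $X$), so that mild solutions of (2.6) are precisely the fixed points of $\mathcal{J}$ in a closed subset of $L^2(\Omega\times[0,1];H)$.

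First I would check that $\mathcal{J}$ maps $L^2(\Omega\times[0,1];H)$ into itself by estimating each of the five terms appearing on the right side of (2.8). The semigroup term $e^{tA}x$ is bounded using the contractivity from $\mathbf{H1}$; the drift integral is controlled via Jensen's inequality and $\mathbf{H2}$; the Gaussian stochastic convolution is handled by the It\^o isometry together with $\mathbf{H3}$, giving the bound $\mathbb{E}\bigl|\int_0^t e^{(t-s)A}B\,dW(s)\bigr|_H^2 \le \int_0^t L_B(t-s)^2(1+\mathbb{E}|Y(s)|_H^2)\,ds$. For the small-jump integral, the $L^2$ isometry for compensated Poisson random measures combined with the elementary inequality $|z|^2\le |z|$ on $\{|z|<1\}$ reduces the estimate to $\mathbf{H4}$ and the contractivity in $\mathbf{H1}$; the large-jump integral is pathwise a finite sum $\sum_{\tau_k\le t} e^{(t-\tau_k)A}\Delta L(\tau_k)$ whose $L^2$ norm is finite by Campbell's formula under $\mathbf{H4}$.

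Next I would upgrade this to a contraction by introducing the weighted norm $\|Y\|_\beta^2 := \sup_{t\in[0,1]} e^{-\beta t}\,\mathbb{E}|Y(t)|_H^2$ and repeating the above estimates on the difference $\mathcal{J}Y_1-\mathcal{J}Y_2$; the drift and the two stochastic convolution terms each contribute a factor tending to zero as $\beta\to\infty$, yielding $\|\mathcal{J}Y_1-\mathcal{J}Y_2\|_\beta\le \tfrac{1}{2}\|Y_1-Y_2\|_\beta$ for $\beta$ sufficiently large. Banach's theorem then delivers a unique mild solution in $L^2(\Omega\times[0,1];H)$.

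For the c\`adl\`ag assertion I would analyse the five terms separately. The deterministic parts are continuous in $t$ by $\mathbf{H1}$--$\mathbf{H2}$; the Brownian stochastic convolution admits a continuous modification via the factorization method of Da Prato--Kwapie\'n--Zabczyk, which applies because $\mathbf{H1}$ yields a contraction semigroup; the large-jump integral is visibly c\`adl\`ag. The main obstacle I anticipate is the c\`adl\`ag regularity of the compensated small-jump convolution: stochastic convolutions against Poisson noise are not c\`adl\`ag a priori, and one must appeal to a Kotelenez/Hausenblas--Seidler maximal inequality or the jump-noise analogue of the factorization technique, both of which hinge on the contractivity in $\mathbf{H1}$ and on the finite first-moment condition $\mathbf{H4}$. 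This is precisely the point at which one would cite Theorem 9.9 of Peszat--Zabczyk, whose hypotheses are verified under $\mathbf{H1}$, $\mathbf{H3}$ and $\mathbf{H4}$.
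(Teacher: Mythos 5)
Your outline is a legitimate and essentially complete route to the existence/uniqueness part, but it does far more than the paper does: the paper offers no proof at all and simply declares the proposition to be a special case of Theorem 9.9 in Peszat--Zabczyk, so the entire content of the statement --- including the c\`adl\`ag regularity of the Poisson stochastic convolution, which you correctly identify as the delicate point --- is outsourced to that reference. Your Banach fixed-point argument in the weighted norm $\|Y\|_\beta$ is the standard proof of such a theorem, and your treatment of the c\`adl\`ag question (factorization for the Gaussian convolution, a Kotelenez/Hausenblas--Seidler type maximal inequality for the compensated jump convolution, both resting on the contractivity in $\mathbf{H1}$) is exactly the machinery behind it; in the end both you and the paper lean on the same external result, but you make visible what it actually takes, which is useful.

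Two points deserve correction. First, the proposition assumes only $\mathbf{H1}$--$\mathbf{H3}$, yet your estimates for the jump integrals invoke $\mathbf{H4}$; for the compensated small-jump convolution this is unnecessary, since the It\^{o} isometry for $\tilde{N}$ only requires $\sum_j\int_{|z|<1}|z|^2\nu_j(dz)<\infty$, which is already contained in the standing assumption $\sum_j\int_{\mathbb{R}}(|z|^2\wedge1)\nu_j(dz)<\infty$ from Section 2.1. Second, and more seriously, the claim that the large-jump integral has finite $L^2$ norm ``by Campbell's formula under $\mathbf{H4}$'' does not hold: $\mathbf{H4}$ constrains only the region $|z|<1$ and gives no control on $\sum_j\int_{|z|\geq1}|z|^2\nu_j(dz)$, which is what square integrability of the large-jump part actually requires (compare the square-integrability criteria stated in Examples 1 and 2). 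This is a gap in the paper's own formulation as well --- the assertion $X\in L^2(\Omega\times[0,1];H)$ implicitly presupposes a square-integrable L\'evy process --- but your sketch should state that extra moment condition explicitly rather than attribute it to the wrong hypothesis.
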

Note that the solution process $X^A$ is given by
\begin{equation}
\begin{split}
X^A(t)&=\int_0^t\exp((t-s)A)BdW(s)+\int_0^t\int_{|y|_H<1}\exp((t-s)A)y\tilde{N}(ds,dy)\\
&\ \ +\int_0^t\int_{|y|_H\geq1}\exp((t-s)A)yN(ds,dy),
\end{split}
\end{equation}

As shown in Proposition 2.3, this solution process has a c\`{a}dl\`{a}g version. Moreover, under the hypothesis $\mathbf{H4}$, we have the following lemma.
\begin{lem}
Let $Z(t)=\int_0^t\int_{|y|_H<1}\exp((t-s)A)y\tilde{N}(ds,dy)+\int_0^t\int_{|y|_H\geq1}\exp((t-s)A)yN(ds,dy)$, for $t\in[0,T]$. Then $Z(t)$ is finite variation process if and only if $\sum\limits_{j=1}^{\infty}\int_{|z|<1}|z|\nu_j(dz)<\infty$.
\end{lem}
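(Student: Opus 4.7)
The plan is to prove both directions by exploiting the fact that every jump of $L$ lies in the one-dimensional span of a single eigenvector $e_j$ of $A$, on which the semigroup $e^{sA}$ reduces to the scalar decay $e^{-\lambda_j s}$. This turns the $H$-valued bounded variation question for $Z$ into a summable collection of one-dimensional monotone decrements, one per jump, plus a Lipschitz drift.

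For the sufficiency direction, I would assume \textbf{H4} and first observe that under this hypothesis the sum $\sum_{0<s\le t}e^{(t-s)A}\Delta L(s)\mathbf{1}_{\{|\Delta L(s)|_H<1\}}$ and the Bochner integral $\int_0^t\!\int_{|y|_H<1}e^{(t-s)A}y\,\nu(dy)\,ds$ converge separately, so the compensated integral defining $Z$ splits into their difference. Combining with the large-jump term yields
\begin{equation*}
Z(t)=\sum_{0<s\le t}e^{(t-s)A}\Delta L(s)\;-\;\int_0^t e^{(t-s)A}c\,ds,\qquad c:=\int_{|y|_H<1}y\,\nu(dy)\in H.
\end{equation*}
The drift-like second term is Lipschitz in $t$ by semigroup contraction, hence of bounded variation. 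For the pure jump part $J(t):=\sum_s e^{(t-s)A}\Delta L(s)$, I would write each jump as $\Delta L(s)=z_s e_{j(s)}$; the $s$-th summand is then the scalar path $u_s(t)=z_s e^{-\lambda_{j(s)}(t-s)}e_{j(s)}\mathbf{1}_{\{s\le t\}}$, whose total variation on $[0,T]$ is the initial jump plus monotone exponential decrement, namely $|z_s|+|z_s|(1-e^{-\lambda_{j(s)}(T-s)})\le 2|\Delta L(s)|_H$. By Campbell's formula and the finite intensity of large jumps, $\sum_{s\le T}|\Delta L(s)|_H<\infty$ a.s., so $\{u_s\}$ is absolutely summable in the Banach space of $H$-valued BV functions on $[0,T]$, giving $V_{[0,T]}(J)\le 2\sum_{s\le T}|\Delta L(s)|_H$ a.s.

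For the necessity direction, I would assume $\sum_j\int_{|z|<1}|z|\nu_j(dz)=\infty$. Since the L\'evy--Khinchin structure forces $\sum_j\int(|z|^2\wedge 1)\nu_j(dz)<\infty$, this divergence must persist on every $\{|z|<\varepsilon\}$. Campbell's theorem applied to the Poisson random measure $N$ on $[0,T]\times H$ then gives $\sum_{0<s\le T}|\Delta L(s)|_H\mathbf{1}_{\{|\Delta L(s)|_H<\varepsilon\}}=\infty$ a.s. Since the semigroup factor at $s=t$ is the identity, $\Delta Z(s)=\Delta L(s)$ at every jump time, whence $V_{[0,T]}(Z)\ge\sum_{s\le T}|\Delta Z(s)|_H=\infty$ a.s.

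The main technical obstacle is the BV-absolute convergence of $J=\sum_s u_s$ in the sufficiency step. The single-eigendirection structure of each jump is what does the work: in general $s\mapsto e^{sA}y$ is of bounded variation only when $y\in D(A)$, so without the fact that $\Delta L(s)\in\operatorname{span}\{e_{j(s)}\}$ one could not obtain the clean jump-by-jump bound $\|u_s\|_{BV}\le 2|\Delta L(s)|_H$ that drives the estimate.
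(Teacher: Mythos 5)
Your proof is correct, and it takes a genuinely different route from the paper's. The paper works entirely with the total jump magnitude $\sum_{0<t\le T}|\Delta Z(t)|_H$: it computes its Laplace functional $\mathbb{E}\exp\{-\sum_{0<t\le T}|\Delta Z(t)|_H\}$ via the exponential formula for Poisson random measures, reads off from the resulting expression that the jump sum is a.s.\ finite precisely when $\sum_j\int_{|z|<1}|z|\,\nu_j(dz)<\infty$, and then identifies bounded variation with finiteness of that sum. You instead argue pathwise: you split $Z$ into the compensator drift $\int_0^t e^{(t-s)A}c\,ds$ (Lipschitz, hence BV) plus a sum of single-eigendirection trajectories $u_s(t)=z_se^{-\lambda_{j(s)}(t-s)}e_{j(s)}\mathbf{1}_{\{s\le t\}}$, each with variation at most $2|\Delta L(s)|_H$ thanks to the jump-plus-monotone-decay structure, and then sum. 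What your approach buys is an explicit control of the \emph{continuous} variation accumulated between jumps (the semigroup decay acting on past jumps and the compensator drift), which the paper's argument leaves implicit --- the paper only ever estimates the jump sum and then asserts bounded variation. Your necessity direction is also slightly cleaner: you correctly use $\Delta Z(s)=\Delta L(s)$ (the semigroup factor at the jump instant is the identity), so the divergence of $\sum_j\int_{|z|<1}|z|\,\nu_j(dz)$ transfers directly to the jump sum of $Z$, whereas the paper's displayed computation carries an extra decay weight $e^{-\lambda_j(T-t)}$ inside the Poisson integral, and since $\tfrac{1}{\lambda_j}(1-e^{-\lambda_jT})\to 0$ the divergence of the unweighted sum does not by itself force divergence of the weighted one. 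One small point worth making explicit in your write-up: the claim that every jump of $L$ lies in a single eigendirection relies on the a.s.\ absence of simultaneous jumps of the independent components $L_j$, which you use but do not state.
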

\begin{proof}
Let $\Delta Z(t)=Z(t)-Z(t-)$, for $t\in[0,T]$. Then we have
\begin{align*}
\mathbb{E}\exp\{-\sum\limits_{0<t\leq T}|\Delta Z(t)|_H\}&=\mathbb{E}\exp\{-\sum\limits_{j=1}^{\infty}\int_0^T\int_{\mathbb{R}}|z|e^{-\lambda_j(T-t)}N_j(dt,dz)\},\\
&=\exp\{-\sum\limits_{j=1}^{\infty}\frac{1}{\lambda_j}(1-e^{-\lambda_jT})\int_{\mathbb{R}}(1-e^{|z|})\nu_j(dz)\}.
\end{align*}

Note that $1-e^{-\lambda_jT}\leq\lambda_j T$. Thus, if $\sum\limits_{j=1}^{\infty}\int_{\mathbb{R}}(1-e^{|z|})\nu_j(dz)\}=\infty$, or equivalently if $\sum\limits_{j=1}^{\infty}\int_{|z|<1}|z|\nu_j(dz)=\infty$, then $\sum\limits_{0<t\leq T}|\Delta Z(t)|_H=\infty$, $\mathbb{P}$-a.s. And if $\sum\limits_{j=1}^{\infty}\int_{\mathbb{R}}(1-e^{|z|})\nu_j(dz)\}<\infty$, or equivalently if $\sum\limits_{j=1}^{\infty}\int_{|z|<1}|z|\nu_j(dz)<\infty$, then
\begin{align*}
\mathbb{E}\sum\limits_{0<t\leq T}|\Delta Z(t)|_H\chi_{\{|Z(t)|_H\leq M\}}=\sum\limits_{j=1}^{\infty}\frac{1}{\lambda_j}(1-e^{-\lambda_jT})\int_{|z|<1}|z|\nu_j(dz)<\infty.
\end{align*}
Thus $Z(t)$ is of bounded variation. $\square$
\end{proof}

This lemma is very important, which is able to control the path integral while deriving the Onsager-Machlup action functional.

\subsection{Definition of Onsager-Machlup action functional}

Now, we define the Onsager-Machlup action functional \cite{bardina2003onsager}  associated with the stochastic partial differential equation (2.6). Our goal is to study the limiting behaviour of ratios of the form
\begin{equation}
\begin{split}
\gamma_{\epsilon}(\phi)=\frac{\mathbb{P}(\|X-\phi\|\leq\epsilon)}{\mathbb{P}(\|X^A\|\leq\epsilon)},
\end{split}
\end{equation}
when $\epsilon$ tends to $0$. Here $\phi$ is a deterministic function satisfying some regularity conditions and $\|\cdot\|$ is a suitable norm defined on the functions from $[0, 1]$ to $H$.

When $\lim\limits_{\epsilon\rightarrow0}\gamma_{\epsilon}(\phi)=\exp\{J_0(\phi)\}$ for all $\phi$ in a reasonable class of functions, the functional $J_0$ is called the Onsager-Machlup action functional associated to system (2.6). We will chose to work with the Hilbert norm on $L^2([0,1];H)$ for reasons in \cite{bardina2003onsager}.

As we can see, the Onsager-Machlup action functional quantifies the probability of the solution path up to a given function on a small tube. If we minimize this functional given two metastable states, we will obtain the most probable transition pathway between two fixed points \cite{ren2020identifying,zheng2020maximum}. This is an important application to study the dynamics for stochastic partial differential equations.

\section{Onsager-Machlup action functional}
In this section, we derive the Onsager-Machlup action functional for the stochastic partial differential equation (2.6). This result covers the results of both infinite case \cite{bardina2003onsager,capitaine1995onsager} and finite case \cite{chao2019onsager}. At first, in section 3.1, we apply the Girsanov transformation in order to reduce our problem to the evaluation of a functional of the stochastic convolution $X^A$. Then in section 3.2, we use the It\^{o} formula to express the Radon-Nikodym derivative in terms of a path integral. Thus in section 3.3, we obtain the Onsager-Machlup action functional by analyzing the limit behaviors of a path integral.

\subsection{An application of Girsanov's transformation}
In this subsection, we will apply the Girsanov transformation to obtain the relation between two probability measures. Actually, the Girsanov theorem for L\'{e}vy process does not work here, because after the transformation, the distribution of resulting noise in new probability space differs to the original noise with respect to the original probability space, which results in the failure of weak uniqueness. Benefiting from the independence of Brownian motion and Poisson random measure, we can use the Brownian motion to absorb the drift. This idea comes from the finite case in \cite{chao2019onsager}.

To do so, we first introduce an auxiliary system. We will take our deterministic function $\phi$ in a so-called Cameron-Martin space. Fix a function $h\in L^2([0,1];H)$. Let $\phi^h$ the solution of the infinite dimensional equation
\begin{equation}
\begin{split}
&d\phi^h(t)=A\phi^h(t)dt+Bh(t)dt, t\in[0,1],\\
&\phi^h(0)=x.
\end{split}
\end{equation}

The assumption $h\in L^2([0,1];H)$ is required in order to apply Girsanov's transformation. We also assume that for each $t\in[0,1]$, $F(t,x)\in Im(B)$ a.s, and
\begin{equation}
\begin{split}
\mathbb{E}[\exp(\frac{1}{2}\int_0^1|B^{-1}F(t,X(t)|_H^2)]<\infty.
\end{split}
\end{equation}

Recall that we will call measures $\mu_X$ and $\mu_Y$ equivalent ($\mu_X\sim \mu_Y$) if $\mu_X$ is absolutely continuous with respect to $\mu_Y$ ($\mu_X\ll\mu_Y$) and $\mu_Y\ll\mu_X$. Applying the Girsanov theorem, we have the following lemma.
\begin{lem}
Let $X_t$ and $Y_t$ be two solution processes defined by the stochastic partial differential equations with respect $(\Omega, \mathcal{F}, (\mathcal{F}_t)_{t\geq0}, \mathbb{P})$
\begin{equation}
\begin{split}
dX(t)&=[AX(t)+F(t,X(t))]dt+BdW(t)+ dL(t),\\
dY(t)&=[AY(t)+G(t,Y(t))]dt+BdW(t)+ dL(t),
\end{split}
\end{equation}
where the driven L\'{e}vy process is defined as (2.3) satisfying the hypothesis $\mathbf{H4}$. Here, $X_0=Y_0=x\in H$, $t\in[0,1]$, $A,B,F$ satisfy the previous condition, and $G$ is required the same condition as $F$. Then, we have $\mu_X\sim\mu_Y$ and the Radon-Nikodym derivative of $\mu_X$ with respect to $\mu_Y$ is given by
\begin{equation}
\begin{split}
\frac{d\mu_X}{d\mu_Y}[Y_t(\omega)]&=\exp\{ \int_0^t\langle B^{-1}(F(s,Y(s))-G(s,Y(s))),dW(s)\rangle\\
& \ \ \ \ \ -\frac{1}{2}\int_0^t|B^{-1}(F(s,Y(s))-G(s,Y(s)))|_H^2ds\}.
\end{split}
\end{equation}
\end{lem}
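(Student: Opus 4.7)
The plan is to effect the change of drift from $G$ to $F$ by absorbing the mismatch into the Brownian component $W$ alone, exploiting the independence of $W$ and the Poisson random measures $\{N_j\}$ so that the L\'evy part $L$ is left untouched. Set $\xi(t) := B^{-1}(F(t,Y(t)) - G(t,Y(t)))$; this is a well-defined $H$-valued process by the standing assumption $F(t,\cdot), G(t,\cdot) \in \mathrm{Im}(B)$. Form the Dol\'eans--Dade exponential
\[
M(t) = \exp\Bigl\{\int_0^t \langle \xi(s), dW(s)\rangle - \tfrac{1}{2}\int_0^t |\xi(s)|_H^2\, ds\Bigr\}.
\]
A Novikov-type bound derived from (3.2) (applied to both $F$ and $G$, combined by Cauchy--Schwarz) shows that $M$ is a genuine $(\mathcal{F}_t, \mathbb{P})$-martingale, so that $d\mathbb{Q}/d\mathbb{P} := M(1)$ defines a probability measure $\mathbb{Q} \sim \mathbb{P}$.

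Next, I would invoke the infinite-dimensional Girsanov theorem for cylindrical Wiener processes: under $\mathbb{Q}$, the process $\widetilde W(t) := W(t) - \int_0^t \xi(s)\, ds$ is again a cylindrical Wiener process. Because $W$ is independent of the Poisson measures $\{N_j\}$, their compensators are unaffected by the change of measure, so the L\'evy--It\^o decomposition (2.3) of $L$ remains valid under $\mathbb{Q}$. Substituting $dW = d\widetilde W + \xi\, dt$ into the equation for $Y$ converts the drift $G(t,Y(t))$ into $G(t,Y(t)) + B\xi(t) = F(t,Y(t))$, so $Y$ satisfies under $\mathbb{Q}$ the same SPDE that $X$ satisfies under $\mathbb{P}$, driven by the pair $(\widetilde W, L)$ with the original joint law.

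Weak uniqueness for (2.6), which follows from Proposition 2.3 via a Yamada--Watanabe-type argument, then yields $\mathcal{L}_{\mathbb{Q}}(Y) = \mathcal{L}_{\mathbb{P}}(X) = \mu_X$. For any Borel set $\Gamma$ on path space this gives $\mu_X(\Gamma) = \mathbb{E}_{\mathbb{P}}[M(1)\mathbf{1}_{\{Y \in \Gamma\}}]$, which is exactly the stated Radon--Nikodym identity, and mutual absolute continuity follows from $M(1)>0$ $\mathbb{P}$-a.s. The main obstacle I anticipate is the final representational step of writing $M(1)$ as a measurable functional of the single path $Y_\cdot(\omega)$, as the formula in the statement demands: one must reconstruct $B\,dW$ from $Y$ via the relation $dY - [AY + G(\cdot,Y)]\,dt - dL$ on the canonical c\`adl\`ag path space, cleanly separating the continuous martingale part of $Y$ from its pure-jump part, and this is exactly where the finite-variation property of the L\'evy stochastic convolution (the lemma in Section 2 under hypothesis $\mathbf{(H4)}$) becomes indispensable. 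A secondary technicality is the unboundedness of $B^{-1}$: verifying $\xi \in L^2([0,1];H)$ almost surely, as required for both Novikov and the stochastic integrals, forces careful use of the range assumptions on $F$ and $G$ together with (3.2).
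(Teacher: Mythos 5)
Your proposal is correct and follows essentially the same route as the paper: form the Dol\'eans--Dade exponential of $\int_0^t\langle B^{-1}(F-G),dW\rangle$ on the Brownian component only, apply the infinite-dimensional Girsanov theorem (the independence of $W$ and the Poisson random measures leaving the L\'evy part and its compensator unchanged), and conclude via uniqueness in distribution that the law of $Y$ under the new measure equals $\mu_X$, which yields the stated Radon--Nikodym derivative. The technical points you flag (Novikov via the integrability assumption (3.2), and expressing $M(1)$ as a functional of the path $Y$) are indeed the places where the paper is terse, but they do not change the argument.
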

\begin{proof}
Define
\begin{align*}
M_t=\exp&\{ \int_0^1\langle B^{-1}(F(s,Y(s))-G(s,Y(s))),dW(s)\rangle\\
\ \ \  &-\frac{1}{2}\int_0^1|B^{-1}(F(s,Y(s))-G(s,Y(s)))|_H^2ds\}.
\end{align*}
According to the Girsanov transformation \cite[Theorem 3.17]{jacod2013limit}\cite[Theorem 2.1]{qiao2017path}, $\widehat{\mathbb{P}}$ defined as $d\widehat{\mathbb{P}}(\omega)=M_t(\omega)d\mathbb{P}(\omega)$ is a probability measure, and the process $\widehat{W}_t:=W_t+\int_0^tB^{-1}(F(s,Y(s))-G(s,Y(s)))ds$ is a cylindrical Brownian motion under the new filtered probability space $(\Omega, \mathcal{F}, (\mathcal{F}_t)_{t\geq0}, \widehat{\mathbb{P}})$. Moreover, $\widetilde{N}(dt,dy)$ is still the compensated martingale measure with jump measure $\nu$ with respect to $\widehat{\mathbb{P}}$. Thus the process $Y_t$ respect to $\widehat{\mathbb{P}}$ has the stochastic differential representation
\begin{equation}
\begin{split}
dY(t)=[AY(t)+F(t,Y(t))]dt+Bd\widehat{W}(t)+ dL(t).
\end{split}
\end{equation}
Due to the uniqueness in distribution, we have the equality of $\mu_X$ and $\mu_Y^{\widehat{\mathbb{P}}}$. Then we obtain
\begin{align*}
\frac{d\mu_X}{d\mu_Y}[Y_t(\omega)]&=\frac{d\widehat{\mathbb{P}}}{d\mathbb{P}}(\omega)\\
&=\exp\{ \int_0^t\langle B^{-1}(F(s,Y(s))-G(s,Y(s))),dW(s)\rangle\\
& \ \ \ \ \ -\frac{1}{2}\int_0^t|B^{-1}(F(s,Y(s))-G(s,Y(s)))|_H^2ds\}.
\end{align*}
The proof is complete.  $\square$
\end{proof}

Now, let us consider the function $\gamma_{\epsilon}(\phi)$ defined as (2.10). For every $\epsilon>0$, applying lemma 3.1 with $G(s)=Bh(s)$ and noting that $Y(t)=\phi^h(t)+X^A(t)$, we have
\begin{equation}
\begin{split}
\mathbb{P}(\|X-\phi^h\|_2\leq\epsilon)=\widehat{\mathbb{P}}(\|Y-\phi^h\|_2\leq\epsilon)=\mathbb{E}[\exp\{\Lambda\}\chi_{\|X^A\|_2\leq\epsilon}],
\end{split}
\end{equation}
where
\begin{equation}
\begin{split}
\Lambda&=\int_0^1\langle B^{-1}F(s,X^A(s_{-})+\phi^h(s))-h(s),dW(s)\rangle\\
&\ \ -\frac{1}{2}\int_0^1|B^{-1}F(s,X^A(s_{-})+\phi^h(s))-h(s)|_H^2ds.
\end{split}
\end{equation}

\begin{rem}Because we use the Girsanov theorem of Brownian motion to absorb the drift term by renorming the probability space, the only difference to Brownian motion is the solution $X^A$ of linear system (F=0 and x=0) which is now
\begin{equation}
\begin{split}
X^A(t)&=\int_0^te^{(t-s)A}BdW(s)+\int_0^t\int_{|y|_H<1}e^{(t-s)A}y\widetilde{N}(dt,dy)\\
&+\int_0^t\int_{|y|_H\geq1}e^{(t-s)A}yN(dt,dy).
\end{split}
\end{equation}
But this is just the essential difference to the Brownian case. Bardina, Rovira and Tindel \cite{bardina2003onsager} use the $L^2$ techniques to analyze the limiting behaviors through representing the process by random variables. But now, this method will meet new difficulties, because the representation of the process $X^A$ by Karhunen-Lo\`{e}ve expansion can't obtain the independence of random variables. Inspired of by the works\cite{chao2019onsager,durr1978onsager}, we could use It\^{o}'s formula to express the Radon-Nikodym derivative in terms of a path integral.
\end{rem}

\begin{rem}By weak uniqueness, the transformed $Y(t)=\phi^h(t)+X^A(t)$ is the solution of the following SPDEs
\begin{equation}
dY(t)=[AY(t)+Bh(t)]dt+BdW(t)+dL(t)
\end{equation}
We will apply It\^{o}'s formula to this process in next subsection. Furthermore, the norm of $X^A(t)$ is less than $\epsilon$, thus $y(t)$ can be considered as a perturbation of $\phi^h(t)$. Then we could take the Taylor's expansion at $\phi^h(t)$ for every $C^2$ function.
\end{rem}

\subsection{Representation by a path integral}
In this subsection, we will use It\^{o}'s formula to represent the Radon-Nikodym derivative in terms of a path integral. This method works because the Poisson stochastic integral can also be expressed by path integral and it can be controlled by bounded variation. Thus, we are able to derive the Onsager-Machlup action functional.

Let $V:H\rightarrow \mathbb{R} $ be $C^2$ function. Assume that for each $x$, $DV(x)=(B^{-1})^{*}(B^{-1}F(x)-h)$, $D^2V(x)\in L_{(HS)}(H,H)$ and that the mapping $x\mapsto D^2V(x)$ is uniformly continuous on any bounded subset of $H$. Applying It\^{o}'s formula \cite[Appendix 4]{peszat2007stochastic} \cite[Theorem 27.1]{metivier1982semimartingales} to $Y(t)$ driven by (3.9) in Hilbert space, we have
\begin{equation}
\begin{split}
V(y(t))-V(y(0))&=\int_0^t\langle B^{-1}F(s,y(s_{-}))-h(s),B^{-1}(Ay(s_{-})+Bh(s)\rangle ds\\
&+\int_0^t\langle B^{-1}F(s,y(s_{-}))-h(s),dW(s)\rangle\\
&+\frac{1}{2}\int_0^t\sum\limits_{j=1}^{\infty}D^2_{jj}V(y(s_{-}))Be_j\otimes Be_jds \\
&+ \sum\limits_{0\leq s\leq t}[V(y(s))-V(y(s_{-}))]\\
&+\int_0^t\int_{|\xi|_H<1}\langle B^{-1}F(s,y(s_{-}))-h(s),B^{-1}\xi\rangle\nu(d\xi)ds.
\end{split}
\end{equation}
Then the exponent part of Radon-Nikodym derivative can be represented as
\begin{equation}
\begin{split}
\Lambda&=V(y(1))-V(y(0))+ \sum\limits_{0\leq t\leq 1}[V(y(t))-V(y(t_{-}))]\\
&-\int_0^1b(y(t_{-})) dt+\int_0^1\int_{|\xi|_H<1}\langle B^{-1}F(s,y(t_{-}))-h(t),B^{-1}\xi\rangle\nu(d\xi)dt,
\end{split}
\end{equation}
where $b$ is defined as
\begin{equation}
\begin{split}
b(y(t_{-})))&=|B^{-1}F(t,y(t_{-})))-h(s)|_H^2+\sum\limits_{j=1}^{\infty}D^2_{jj}V(y(t_{-})))Be_j\otimes Be_j \\
&+2\langle B^{-1}F(t,y(t_{-})))-h(t),B^{-1}(Ay(t_{-}))+Bh(t)\rangle.
\end{split}
\end{equation}

We see that $\Lambda$ is now the functional of the path $y(t)$. But we can not always find the the primitive function $V$ given a function F. We will apply the Poincar\'{e} lemma to prove the the exitance of primitive function $V$. Actually in one dimension \cite{chao2019onsager,durr1978onsager}, the primitive function $V$ can be defined as a potential function
\begin{equation}
\begin{split}
V(x)=\frac{1}{B}\int^x[\frac{1}{B}F(y)-h]dy.
\end{split}
\end{equation}
But in infinite dimensional case or even two dimensional case, this potential is not always available. We have to use the Poincar\'{e} lemma which states the relation between closed form and exact form. It provides a sufficient condition for the existence of a potential function. Hence, we have the following lemma.

\begin{lem}
Let H be a separable Hilbert space with basis $\{e_j\}_{j\geq 1}$ and $F$ be a mapping from $H$ to $H$. For each $x\in H$, assume $DF(x)$ is symmetric operator from $H$ to $H$. Then there exists a smooth function $V:H\rightarrow \mathbb{R}$, such that for all $x\in H$, DV(x)=F(x).
\end{lem}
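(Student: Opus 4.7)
The plan is to mimic the classical finite-dimensional proof of the Poincaré lemma using a radial path integral, and check that the argument survives intact in the separable Hilbert space setting because the construction never uses coordinates. Specifically, I would define
\begin{equation*}
V(x)\;=\;\int_0^1 \langle G(tx),x\rangle\,dt,
\end{equation*}
a Bochner (in fact scalar) integral which is well-defined since $G$ is continuous and $t\mapsto \langle G(tx),x\rangle$ is therefore continuous on $[0,1]$. This is the natural choice because if $V$ is to be a ``potential'' for $G$, then integrating $dV$ along the straight-line path $\gamma(t)=tx$ from $0$ to $x$ recovers $V(x)-V(0)$, and setting $V(0)=0$ gives the displayed formula.

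Next I would compute the Fréchet derivative $DV(x)$. Differentiating under the integral sign (justified by continuity of the partial derivatives together with compactness of $[0,1]$, using that $G$ is $C^1$ so that $(t,x)\mapsto DG(tx)$ is continuous) yields, for every $h\in H$,
\begin{equation*}
DV(x)[h]\;=\;\int_0^1\!\bigl(t\,\langle DG(tx)[h],x\rangle+\langle G(tx),h\rangle\bigr)\,dt.
\end{equation*}
Here is where the symmetry hypothesis on $DG(tx)$ enters: I would rewrite $\langle DG(tx)[h],x\rangle=\langle DG(tx)[x],h\rangle$, which transforms the integrand into
\begin{equation*}
\langle G(tx),h\rangle+t\,\langle DG(tx)[x],h\rangle\;=\;\frac{d}{dt}\bigl(t\,\langle G(tx),h\rangle\bigr).
\end{equation*}

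From here the proof essentially writes itself: the fundamental theorem of calculus gives
\begin{equation*}
DV(x)[h]\;=\;\bigl[\,t\,\langle G(tx),h\rangle\,\bigr]_{t=0}^{t=1}\;=\;\langle G(x),h\rangle,
\end{equation*}
so $DV(x)=G(x)$ as required. Smoothness of $V$ follows inductively from smoothness of $G$: if $G\in C^k$ then differentiating the integral representation $k$ more times, again justified by dominated convergence on the compact interval $[0,1]$, shows $V\in C^{k+1}$. The main place where one has to be careful is the interchange of Fréchet differentiation and the integral over $[0,1]$; in infinite dimensions this requires a uniform bound on the difference quotients on bounded sets, which follows from the assumed continuity (and, in the setting of the paper, essentially uniform continuity on bounded subsets) of $DG$. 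The role of the basis $\{e_j\}$ is not substantive — it appears only because the statement is phrased in that language — so no convergence issue involving the basis arises.
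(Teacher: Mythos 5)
Your proof is correct, but it takes a genuinely different route from the paper. The paper verifies that the $1$-form $w=\sum_j G_j(x)\,de_j$ associated with $G$ is closed (this is exactly where the symmetry $\langle DG(x)e_i,e_j\rangle=\langle e_i,DG(x)e_j\rangle$ is used) and then invokes the abstract Poincar\'e lemma for Banach, respectively convenient, vector spaces from the cited references of Lang and Kriegl--Michor to conclude exactness. You instead prove the Poincar\'e lemma directly on the star-shaped domain $H$ by exhibiting the potential through the radial homotopy integral $V(x)=\int_0^1\langle G(tx),x\rangle\,dt$ and checking $DV=G$ by hand; the symmetry hypothesis enters at the single point where $\langle DG(tx)[h],x\rangle$ is rewritten as $\langle DG(tx)[x],h\rangle$ so that the integrand becomes the exact derivative $\frac{d}{dt}\bigl(t\langle G(tx),h\rangle\bigr)$. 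Your computation is right, and the approach buys self-containedness: it produces an explicit formula for $V$ (useful if one ever wants to estimate $V$ or its derivatives, as the main theorem implicitly does when bounding $\sum_t[V(y(t))-V(y(t-))]$ by $|DV|$), and it avoids any appeal to differential forms on infinite-dimensional manifolds, where the formal manipulation of $\sum_{ij}G_{ij}\,de_i\wedge de_j$ requires some care. The only caveats are the ones you already flag: you need $G$ to be $C^1$ (with $DG$ continuous, at least on bounded sets) to justify differentiating under the integral and to pass from the Gateaux to the Fr\'echet derivative; in the paper's application $B^{-1}F$ is assumed $C_b^2$, so this is satisfied. Note also that the lemma as stated conflates $F$ and $G$; your reading (a single map $G$ with $DG(x)$ symmetric) is the intended one.
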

\begin{proof}
The dual space of Hilbert space is itself. So for each $x\in H$, $F(x)$ could be considered as a bounded linear functional in $H$. Thus, it defines a 1-form as
\begin{equation}
w=\langle F(x),de\rangle=\sum\limits_{j}F_j(x)de_j.
\end{equation}
Set $G_{ij}(x)=\langle DF(x)e_i,e_j\rangle$. Since $DF(x)$ is symmetric, i.e. $\langle DF(x)e_i,e_j\rangle=\langle e_i,DF(x)e_j\rangle$, we have $G_{ij}(x)=G_{ji}(x)$. Hence, the 2-form $dw$ is
\begin{equation}
\begin{split}
dw=\sum\limits_{ij}\langle e_i,DF(x)e_j\rangle de_i\wedge de_j=\sum\limits_{ij}G_{ij}(x) de_i\wedge de_j,
\end{split}
\end{equation}
where $de_i\wedge de_j$ is the natural wedge product which is bilinear and antisymmetric. Thus, we obtain
\begin{equation}
\begin{split}
dw=\sum\limits_{1\leq i<j}(G_{ij}(x)-G_{ji}(x)) de_i\wedge de_j=0.
\end{split}
\end{equation}
This implies that the mapping F is a closed 1-form. Then by the Poincar\'{e} lemma \cite[Page 137 Theorem 4.1]{lang2012fundamentals} \cite[Page 350 Theorem 33.20]{kriegl1997convenient}, it is an exact form, i.e. there exists a smooth function $V:H\rightarrow \mathbb{R}$, such that for all $x\in H$, $DV(x)=G(x)$. $\square$
\end{proof}

\begin{rem}
Note that our method will additionally need the symmetry of $DF(x)$, while $L^2$ technique doesn't. Bardina, Rovira and Tindel \cite[Proposition 3.7]{bardina2003onsager} uses a symmetrization technique by $S_{QR^{*}}=\frac{1}{2}(QR^{*}+(QR^{*})^{*})$ in the analysis of random variables. But they also need to assume that the symmetry of $\nabla_x F$, that is, $\nabla_x F$ can be diagonalized in the same basis as the operators A and B, when they further express the operator $S_{QR^{*}}$ by $\nabla_x F$ and $\phi^h$. That is, if for every $s\in[0,1]$, the gradient $\nabla_x F(s,\phi^h(s))$ is a trace class operator, they obtain
\begin{equation}
\begin{split}
S_{QR^{*}}=\frac{1}{2}\int_0^1Tr[\nabla_xF(s,\phi^h(s))]ds.
\end{split}
\end{equation}
Actually, we will see that the trace part in Onsager-Machlup action functional comes from the It\^{o} correction in stochastic analysis.
\end{rem}

\subsection{Derivation of Onsager-Machlup action functional}
Now, we are ready to derive the Onsager-Machlup action functional for the stochastic partial differential equation
\begin{equation}
\begin{split}
dX(t)=[AX(t)+F(t,X(t))]dt+BdW(t)+dL(t),
\end{split}
\end{equation}
with initial data $X(0)=x\in H$.

Our main result for this stochastic partial differential equation is in the following theorem.

\begin{thm}$\mathbf{(Onsager-Machlup\ action\ functional)}$
Assume that $\mathbf{H1,H2,H3}$ and $\mathbf{H4}$ are satisfied, $h\in L^2([0,1])$, $\phi^h$ is defined by (3.1) and $B^{-1}F$ is $C_b^2$ in $x$ uniformly in $s\in[0,1]$. If  for every $s\in[0,1]$, the gradient $\nabla_x F(s,\phi^h(s))$ is symmetric and trace class, and $\int_0^1Tr[\nabla_xF(s,\phi^h(s))]ds<\infty$, then the Onsager-Machlup action functional of system (3.18) is given by
\begin{equation}
\begin{split}
J_0(\phi^h,\dot{\phi}^h)&=-\frac{1}{2}\int_0^1|B^{-1}[(A\phi^h(t)+F(t,\phi^h(t))-\dot{\phi}^h(t)-\eta]|_H^2dt\\
&\ \ \ -\frac{1}{2}\int_0^1Tr[\nabla_xF(s,\phi^h(s))]ds,
\end{split}
\end{equation}
where $\eta=\int_{|\xi|_H<1}\xi\nu(d\xi)$.
\end{thm}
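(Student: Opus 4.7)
The plan is to combine the Girsanov reduction (Lemma 3.1), the potential function furnished by the Poincar\'{e} Lemma 3.2, and the It\^{o} formula in Hilbert space to reduce the ratio $\gamma_\epsilon(\phi^h)$ to a conditional expectation whose leading behaviour as $\epsilon\to 0$ can be identified term by term. Starting from (3.6)--(3.7), and using that $B^{-1}F(s,\cdot)$ is $C_b^2$ with $\nabla_x F(s,\phi^h(s))$ symmetric, I would first apply Lemma 3.2 (treating $s$ as a parameter) to produce a smooth potential $V(s,\cdot)\colon H\to\mathbb{R}$ satisfying $DV(s,x) = (B^{-1})^*(B^{-1}F(s,x)-h(s))$. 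This potential is exactly what converts the stochastic integral $\int_0^1\langle B^{-1}F-h,dW\rangle$ appearing in $\Lambda$ into a pathwise functional.

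Next, I would apply the It\^{o} formula to $V(s,y(s))$ with $y(s)=\phi^h(s)+X^A(s)$ solving (3.9), arriving at the representation (3.11). This decomposes $\Lambda$ into four pieces: the boundary increment $V(1,y(1))-V(0,y(0))$, the jump sum $\sum_{s\le 1}[V(s,y(s))-V(s,y(s_-))]$, the drift-plus-trace integral $-\int_0^1 b(y(s_-))\,ds$ defined by (3.12), and the compensator correction $\int_0^1\!\int_{|\xi|_H<1}\langle B^{-1}F-h,B^{-1}\xi\rangle\nu(d\xi)\,ds$. Each piece is then Taylor-expanded around $\phi^h(s)$: under the conditioning $\|X^A\|_2\le\epsilon$, the fluctuation $y(s_-)-\phi^h(s)=X^A(s_-)$ is small in an averaged sense, so the values at $\phi^h$ isolate the deterministic limit while the $X^A$-dependent corrections either vanish in the limit or cancel against $\mathbb{P}(\|X^A\|_2\le\epsilon)$ in the denominator of $\gamma_\epsilon(\phi^h)$.

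The deterministic bookkeeping then runs as follows. The trace contribution $\sum_j\langle D^2V(\phi^h)Be_j,Be_j\rangle=Tr(BB^*D^2V(\phi^h))$ equals $Tr[\nabla_x F(s,\phi^h(s))]$, since $BB^*D^2V=\nabla_x F$ by differentiating the defining identity for $DV$; this yields the term $-\tfrac{1}{2}\int_0^1 Tr[\nabla_x F(s,\phi^h(s))]\,ds$. The quadratic piece $|B^{-1}F(\phi^h)-h|_H^2$ and the cross term $2\langle B^{-1}F(\phi^h)-h,B^{-1}A\phi^h+h\rangle$ in $b$, combined with the compensator correction $\langle B^{-1}F(\phi^h)-h,B^{-1}\eta\rangle$ obtained by evaluating at $\phi^h$, and with the absorbed endpoint integral $\int_0^1\langle DV(\phi^h),\dot\phi^h\rangle\,ds$ from the boundary piece, recombine by completing the square into $-\tfrac{1}{2}\int_0^1|B^{-1}(A\phi^h+F(\phi^h)-\dot\phi^h-\eta)|_H^2\,ds$ once $\dot\phi^h=A\phi^h+Bh$ is substituted. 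Adding the trace part yields the stated $J_0(\phi^h,\dot\phi^h)$.

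I expect the main obstacle to be the rigorous control of the jump and endpoint remainders. For the jump part, I would group $\sum_s[V(y(s))-V(y(s_-))]$ with the compensator correction and rewrite the combination as $\int_0^1\!\int[V(y_-+\xi)-V(y_-)-\chi_{|\xi|_H<1}\langle DV(y_-),\xi\rangle]\nu(d\xi)\,ds$ plus a compensated Poisson martingale; Lemma 2.1, together with hypothesis $\mathbf{H4}$, ensures that the remaining path integral has finite variation, and a second-order Taylor remainder together with the moment bound $\sum_j\int_{|z|<1}|z|\nu_j(dz)<\infty$ isolates the $\eta$-dependence while showing that the higher-order leftovers tend to zero. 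The endpoint $V(1,y(1))-V(0,y(0))$ is first split as $[V(1,\phi^h(1))-V(0,\phi^h(0))]+[V(1,y(1))-V(1,\phi^h(1))]$: the deterministic part is absorbed into the quadratic form via the fundamental theorem of calculus, while the random remainder is handled in the small-tube limit by an argument analogous to Bardina--Rovira--Tindel \cite{bardina2003onsager}, using $\mathbf{H3}$ and a moment bound on $|X^A(1)|_H$. Finally, dominated convergence, justified by the integrability hypothesis (3.2), permits interchanging $\lim_{\epsilon\to 0}$ with the conditional expectation and yields $\lim_{\epsilon\to 0}\gamma_\epsilon(\phi^h)=\exp\{J_0(\phi^h,\dot\phi^h)\}$.
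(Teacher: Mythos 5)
Your proposal follows the paper's own proof essentially step for step: the Girsanov reduction of Lemma 3.1, the construction of the potential $V$ via the Poincar\'e lemma (Lemma 3.2), the It\^o-formula path representation (3.11)--(3.12), the Taylor expansion of $\Lambda$ around $\phi^h$ under the tube conditioning, and the control of the jump contribution through hypothesis $\mathbf{H4}$ and the finite-variation lemma of Section 2. The only cosmetic difference is that you regroup the jump sum with the compensator correction into a compensated Poisson integral before estimating, whereas the paper bounds $\sum_{0\leq t\leq 1}[V(y(t))-V(y(t_{-}))]$ directly by $|DV|$ times the total jump variation; both rest on the same moment condition, so the argument is the same in substance.
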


\begin{rem}
Up to a constant, the Onsager-Machlup action functional can also be written as
\begin{equation}
\begin{split}
J_0(\phi^h,\dot{\phi}^h)&=-\frac{1}{2}\int_0^1|B^{-1}[(A\phi^h(t)+F(t,\phi^h(t))-\dot{\phi}^h(t)]|_H^2dt\\
&\ \ \ -\frac{1}{2}\int_0^1Tr[\nabla_xF(s,\phi^h(s))]ds\\
&\ \ \ -\frac{1}{2}\int_0^1\langle B^{-1}[(A\phi^h(t)+F(t,\phi^h(t))-\dot{\phi}^h(t)],B^{-1}\int_{|\xi|_H<1}\xi\nu(d\xi)\rangle ds.
\end{split}
\end{equation}
As we can see, the first term is the main term, while the second term comes from the It\^{o} correction of Brownian motion. The L\'{e}vy noise results in the third term.

It is interesting to see that only small jumps contribute to the Onsager-Machlup action functional. Moreover, the effect is similar to adding the mean value of small jumps to the drift. Furthermore, if the integral $\eta=\int_{|\xi|_H<1}\xi\nu(d\xi)$ is the zero element in $H$, then the third term will vanish, in this case, the L\'{e}vy noise will have no effect to the Onsager-Machlup action functional.
\end{rem}
\begin{rem}We derived the Onsager-Machlup action functional for the stochastic partial differential equation
\begin{equation}
\begin{split}
dX(t)=[AX(t)+F(t,X(t))]dt+BdW(t)+dL(t),
\end{split}
\end{equation}
with initial data $X(0)=x\in H$. If this system does not have L\'{e}vy process $L(t)$(i.e., $\nu=0$),  then the Onsager-Machlup action functional will reduce to that in the Brownian case \cite{bardina2003onsager}.

Moreover, our results hold for the L\'{e}vy process with both small and large jumps, while Chao and Duan \cite{chao2019onsager} only dealt with small jumps. Our derivation is available for two reasons. One is that the Girsanov theorem still holds for large jumps. The other is that we can use the bounded variation to control the summation of the jumps.
\end{rem}

\begin{rem}The proof remains the same for a function $F$ satisfying

1) $F:[0,1]\times L^2([0,1];H)\rightarrow L^2([0,1];H)$.

2) $B^{-1}F(t,\cdot):L^2([0,1];H)\rightarrow L^2([0,1];H)$ is $C_b^2$ uniformly in $t\in[0,1]$.

3) For all $t\in[0,1]$ and $\xi\in L^2([0,1];H)$, $F(t,\xi)=F(t,\xi1_{[0,1]})$.
\end{rem}
Now, we prove Theorem 3.6.
\begin{proof}
As calculated in (3.11) and (3.12), the exponent part of Radon-Nikodym derivative can be represented as
\begin{equation}
\begin{split}
\Lambda&=V(y(1))-V(y(0))+ \sum\limits_{0\leq t\leq 1}[V(y(t))-V(y(t_{-}))]\\
&-\int_0^1b(y(t_{-})) dt+\int_0^1\int_{|\xi|_H<1}\langle B^{-1}F(s,y(t_{-}))-h(t),B^{-1}\xi\rangle\nu(d\xi)dt,
\end{split}
\end{equation}
where $b$ is defined as
\begin{equation}
\begin{split}
b(y(t_{-})))&=|B^{-1}F(t,y(t_{-})))-h(s)|_H^2+\sum\limits_{J=1}^{\infty}D^2_{jj}V(y(t_{-})))Be_j\otimes Be_j \\
&+2\langle B^{-1}F(t,y(t_{-})))-h(t),B^{-1}(Ay(t_{-}))+Bh(t)\rangle.
\end{split}
\end{equation}

The integrals with respect to time variable $t$ in (3.23) are Riemann integrals. Now we expand the exponent of (3.23) into a Taylor series around $y(t)=\phi^h(t)$ due to $|X^A(t)|\leq\epsilon$, and split the terms of zero order. If we choose $\epsilon$ small enough, the remaining terms can be made arbitrarily small.

Since $B^{-1}F$ is a $C_b^2$ in $x$ uniformly in $t\in[0,1]$, and $V$ is at least $C^2$ function, we obtain
\begin{equation}
\begin{split}
V(y(1))&=V(\phi^h(1))+\langle DV(\phi^h(1)),X^A(1)\rangle+o(\epsilon),\\
b(y(t-))&=|B^{-1}F(t,\phi^h(t))-h(t)|^2+2\langle B^{-1}F(s,\phi^h(t))-h(t),B^{-1}(A\phi^h(t)+Bh(t))\rangle\\
&\ \ +Tr(D_xF(t,\phi^h(t)))+Tr(\langle D_x^2F(t,\phi^h(t)),X^A(t)\rangle)\\
&\ \  +2\langle B^{-1}F(s,\phi^h(t))-h(t),B^{-1}AX^A(t)\rangle\\
&\ \ +2\langle B^{-1}F(s,\phi^h(t))-h(t),\langle D_xF(t,\phi^h(t)),X^A(t)\rangle\rangle +o(\epsilon),\\
\end{split}
\end{equation}
and
\begin{equation}
\begin{split}
\langle B^{-1}F(s,&y(t-))-h(t),B^{-1}\eta\rangle=\langle
B^{-1}F(s,\phi^h(t))-h(t),B^{-1}\eta\rangle\\
&\ \ \ \ \ \ \ \ \ \ \ \ \ \ \ \ \ \ \ \ \ \ \ \ \ \ \ \ +\langle B^{-1}\langle D_xF(s,\phi^h(t)),X^A(t)\rangle,B^{-1}\eta\rangle+o(\epsilon),
\end{split}
\end{equation}
where $\eta=\int_{|\xi|_H<1}\xi\nu(d\xi)$. Note that
\begin{equation}
\begin{split}
V(\phi^h(1))-V(\phi^h(o))=\int_0^1\langle\dot{\phi}^h(t),(B^{-1})^{*}(B^{-1}F(\phi^h(t),t)-h(t))\rangle dt.
\end{split}
\end{equation}

Since $|X^A(t)|\leq\epsilon$, by H\"{o}lder inequality and by the fact of $\dot{\phi}^h(t)=A\phi^h(t)-Bh(t)$, we obtain
\begin{equation}
\begin{split}
\Lambda&=V(\phi^h(1))-V(\phi^h(1))\\
&\ \ -\int_0^1Tr(D_xF(t,\phi^h(t)))dt-\int_0^1|B^{-1}F(t,\phi^h(t))-h(t)|^2dt\\
&\ \ -2\int_0^1\langle B^{-1}F(s,\phi^h(t))-h(t),B^{-1}\eta\rangle dt\\
&\ \ +\sum\limits_{0\leq t\leq 1}[V(y(t))-V(y(t-))]+O(\epsilon)\\
&=-\frac{1}{2}\int_0^1|B^{-1}[(A\phi^h(t)+F(t,\phi^h(t))-\dot{\phi}^h(t)]|_H^2dt\\
&\ \ \ -\frac{1}{2}\int_0^1Tr[\nabla_xF(s,\phi^h(s))]ds\\
&\ \ \ -\frac{1}{2}\int_0^1\langle B^{-1}[(A\phi^h(t)+F(t,\phi^h(t))-\dot{\phi}^h(t)],B^{-1}\eta\rangle dt\\
&\ \ \ +\sum\limits_{0\leq t\leq 1}[V(y(t))-V(y(t-))]+O(\epsilon),
\end{split}
\end{equation}
where $\eta=\int_{|\xi|_H<1}\xi\nu(d\xi)$. As for the remaining term, we have the following control
\begin{equation}
\begin{split}
\sum_{0\leq t\leq 1}[V(y(t))-V(y(t-))]\leq |DV|\sum_{0\leq t\leq 1}|y(t)-y(t-)|,
\end{split}
\end{equation}
where the operator norm $|DV|$ is finite since the operator $D_xV(x,t)=(B^{-1})^{*}(B^{-1}F(x,t)-h(t))$ is $C_b^2$ in $x$ uniformly in $t\in[0,1]$. And $y(t)$ is given by
\begin{equation}
\begin{split}
y(t) &= \phi^h(t)+\int_0^t\exp((t-s)A)BdW(s)+\int_0^t\int_{|y|_H<1}\exp((t-s)A)y \tilde{N}(dt,dy)\\
&\ \ +\int_0^t\int_{|y|_H\geq1}\exp((t-s)A)y N(dt,dy).
\end{split}
\end{equation}

Since $\phi^h(t)+\int_0^t\exp((t-s)A)BdW(s)$ is continuous in $t\in [0,1]$, we have
\begin{equation}
\begin{split}
\sum_{0\leq t\leq 1}|y(t)-y(t-)|=\sum_{0\leq t\leq 1}|z(t)-z(t-)|,
\end{split}
\end{equation}
where $z(t)=\int_0^t\int_{|y|_H<1}\exp((t-s)A)y \tilde{N}(dt,dy)+\int_0^t\int_{|y|_H\geq1}\exp((t-s)A)y N(dt,dy)$. Then by lemma 2.4, we obtain
\begin{equation}
\begin{split}
\sum_{0\leq t\leq 1}|z(t)-z(t-)|=\sum\limits_{j=1}^{\infty}\frac{1}{\alpha_j}(1-e^{-\alpha_jt})\int_{|z|<1}|z|\nu_j(dz)<\infty.
\end{split}
\end{equation}
Combining (3.8), (3.9), (3.27) and (3.32), we have
\begin{equation}
\begin{split}
&\mathbf{P}(\|X-\phi^h\|_2\leq\varepsilon)\propto\mathbf{E}[\exp\{J_0(\phi^h,\dot{\phi}^h)-C+O(\epsilon)\}\mathbf{1}_{\|X^A\|_2\leq\varepsilon}]
\end{split}
\end{equation}
where $C=\sum\limits_{j=1}^{\infty}\frac{1}{\alpha_j}(1-e^{-\alpha_jt})\int_{|z|<1}|z|\nu_j(dz)$ and  $J_0(\phi^h,\dot{\phi}^h)$ is given by
\begin{equation}
\begin{split}
J_0(\phi^h,\dot{\phi}^h)&=-\frac{1}{2}\int_0^1|B^{-1}[(A\phi^h(t)+F(t,\phi^h(t))-\dot{\phi}^h(t)]|_H^2dt\\
&\ \ \ -\frac{1}{2}\int_0^1Tr[\nabla_xF(t,\phi^h(t))]dt\\
&\ \ \ -\frac{1}{2}\int_0^1\langle B^{-1}[(A\phi^h(t)+F(t,\phi^h(t))-\dot{\phi}^h(t)],B^{-1}\eta\rangle dt.
\end{split}
\end{equation}
Hence, we obtain
\begin{equation}
\begin{split}
\lim\limits_{\epsilon\rightarrow0}\gamma_{\epsilon}(\phi)=\lim\limits_{\epsilon\rightarrow0}\frac{P(\|X-\phi^h\|_2\leq\epsilon)}{P(\|X^A\|_2\leq\epsilon)}\propto \exp\{J_0(\phi^h,\dot{\phi}^h)\}.
\end{split}
\end{equation}
Thus, $J_0(\phi^h,\dot{\phi}^h)$ is the desired Onsager-Machlup action functional by the definition in section 2.3. And up to a constant, it can also be written as
\begin{equation}
\begin{split}
J_0(\phi^h,\dot{\phi}^h)&=-\frac{1}{2}\int_0^1|B^{-1}[(A\phi^h(t)+F(t,\phi^h(t))-\dot{\phi}^h(t)-\eta]|_H^2dt\\
&\ \ \ -\frac{1}{2}\int_0^1Tr[\nabla_xF(s,\phi^h(s))]ds,
\end{split}
\end{equation}
where $\eta=\int_{|\xi|_H<1}\xi\nu(d\xi)$. $\square$
\end{proof}

\section{An example}
In this section, we present an example to illustrate the Onsager-Machlup action functional.
\begin{example}
Suppose that $H=L^2([0,1])$ with Neumann boundary conditions, that is $u'(0)=u'(1)=0, A=\triangle, B=Id$, and $F:[0,1]\times L^2([0,1];H)\rightarrow L^2([0,1];H)$ is given by
\begin{equation}
\begin{split}
[F(t,\xi)](t,x)=f(0)\chi_{[0,1]}(t)\chi(x)+f(\int_0^{\frac{1}{2}}dv\int_0^1dy\xi(v,y))\chi_{[\frac{1}{2},1](t)}\chi(x)
\end{split}
\end{equation}
where $f:\mathbb{R}\rightarrow \mathbb{R}$ is a $C_b^2$ function. In addition, the L\'{e}vy process is given by (2.3) with one sided exponentially tempered stable L\'{e}vy process in Example 1. Then the conditions of in Remark 3.9 are satisfied. The Onsager-Machlup action functional for the stochastic partial differential equation $dX(t)=[\Delta X(t)+F(t,X(t))]dt+dW(t)+dL(t)$ is given by
\begin{align*}
J_0(\phi^h,\dot{\phi}^h)&=-\frac{1}{2}\int_0^1|\Delta\phi^h(t)+F(t,\phi^h(t))-\dot{\phi}^h(t)-\eta|_H^2dt,
\end{align*}
where $\eta=\sum_je_j\int_{-1}^1\frac{c_je^{-\beta_jx}}{x^{1+\alpha_j}}dx$.
\end{example}

In fact, for each $\xi\in L^2([0,1];H)$, define
\begin{align*}
l(\xi)=\int_0^{\frac{1}{2}}ds\int_0^1\xi(s,t)dt,
\end{align*}
which defines a linear functional $l$ on $L^2([0,1];H)$. Hence
\begin{align*}
[F(t,\xi)](t,x)=f(0)\chi_{[0,1]}(t)\chi(x)+f(l(\xi))\chi_{[\frac{1}{2},1]}(t)\chi(x).
\end{align*}

Since $f:\mathbb{R}\rightarrow \mathbb{R}$ is a $C_b^2$ function, we obtain that $F(t,\cdot)$ is a $C_b^2(L^2[0,1];H)$ uniformly in $t\in[0,1]$. Moreover, for each $h\in L^2([0,1];H)$, the Frech\'{e}t derivative of $F(t,\cdot)$ is given by
\begin{align*}
DF(t,\xi)(h)&=\lim\limits_{\tau\rightarrow0}\frac{F(t,\xi+\tau h)-F(t,\xi)}{\tau}\\
&=f'(l(\xi))l(h)\chi_{[\frac{1}{2},1]}(t)\chi(x).
\end{align*}
Thus the conditions of in Theorem 3.6 Remark 3.9 are satisfied. And we have $e_j(y)=\cos(2\pi jy)$ for all $y\in[0,1]$ since $H=L^2[0,1]$. Hence, ${e_j\otimes e_k}$ is the complete orthonormal basis of $L^2([0,1],H)$. Then, we obtain
\begin{align*}
Tr[\nabla_xF(s,\phi^h(s))]&=\sum\limits_{j=1}^{\infty}\sum\limits_{k=1}^{\infty}\langle \nabla_xF(s,\phi^h(s))e_j\otimes e_k,e_j\otimes e_k\rangle_{L^2([0,1];H)}\\
&=\sum\limits_{j=1}^{\infty}\sum\limits_{k=1}^{\infty}f'(l(\phi^h))\int_0^{\frac{1}{2}}\int_0^1cos(2\pi jy)\cos(2\pi kz)dydz\\
&\ \ \ \ \ \ \ \ \ \ \ \  \int_{\frac{1}{2}}^1\int_0^1\cos(2\pi jy)cos(2\pi kz)dydz\\
&=0.
\end{align*}
Therefore, the Onsager-Machlup action functional is
\begin{align*}
J_0(\phi^h,\dot{\phi}^h)&=-\frac{1}{2}\int_0^1|\Delta\phi^h(t)+F(t,\phi^h(t))-\dot{\phi}^h(t)-\eta|_H^2dt,
\end{align*}
where $\eta=\sum_je_j\int_{-1}^1\frac{c_je^{-\beta_jx}}{x^{1+\alpha_j}}dx$.

\section{Conclusion and discussion}
In this paper, we derive the Onsager-Machlup action functional for stochastic partial
differential equations with L\'{e}vy process. As far as we know, it is the first attempt for infinite systems with non-Gaussion L\'{e}vy process. Moreover, our derivation for the Onsager-Machlup action functional is available for the L\'{e}vy process with both small and large jumps, while Chao and Duan \cite{chao2019onsager} only dealt with small jumps. Furthermore, we emphasize that the path representing method is an effective attempt for L\'{e}vy process while the $L^2$ techniques will encouter a new difficulty, that is, the representation of the solution process of linear system by Karhunen-Lo\`{e}ve expansion can not result in the independence of random variables.

With the  Onsager-Machlup action functional, it  becomes possible to examine the most probable pathway for systems modeled by stochastic partial differential equations with non-Gaussian L\'evy noise, by studying the associated Euler-Lagrange equations \cite{chao2019onsager, durr1978onsager}.

However, as the similar problem occurs in \cite{chao2019onsager}, we can only deal with the noise having both Brownian motion and L\'{e}vy process. In our methods, we use the Brownian motion to absorb the drift as the Girsanov theorem permits. It is still open to derive the Onsager-Machlup action functional for stochastic dynamical systems with pure L\'{e}vy process both in finite and infinite dimensional systems, because the Girsanov  theorem fails to normalize two probability measures. Bardina et.al \cite{bardina2002asymptotic} tried to deal with jump functions directly rather than using the Girsanov  theorem to obtain the asymptotic evaluation of the Poisson measure for a tube. But L\'{e}vy process is much more complex than Poisson process. A different approach may be needed to attack this open problem.

\section*{Acknowledgements}
We would like to thank Prof. Renming Song for helpful talks and thank Ying Chao, Yanjie Zhang, Qi Zhang, Jinayu Chen, Pingyuan Wei, Yuanfei Huang, Qiao Huang, Wei Wei and Ao Zhang for helpful discussions. This work was partly supported by NSFC grants 11771449 and 11531006.

\bibliographystyle{abbrv}

\bibliography{Refrences}

\begin{thebibliography}{10}

\bibitem{applebaum2009levy}
D.~Applebaum.
\newblock {\em L{\'e}vy processes and stochastic calculus}.
\newblock Cambridge University Press, 2009.

\bibitem{arnold2013random}
L.~Arnold.
\newblock {\em Random dynamical systems}.
\newblock Springer Science \& Business Media, 2013.

\bibitem{bardina2002asymptotic}
X.~Bardina, C.~Rovira, and S.~Tindel.
\newblock Asymptotic evaluation of the {P}oisson measures for tubes around jump
  curves.
\newblock {\em Applicationes Mathematicae}, 29:145--156, 2002.

\bibitem{bardina2003onsager}
X.~Bardina, C.~Rovira, and S.~Tindel.
\newblock Onsager-{M}achlup functional for stochastic evolution equations.
\newblock In {\em Annales de l'Institut Henri Poincare (B) Probability and
  Statistics}, volume~39, pages 69--93. Elsevier, 2003.

\bibitem{bottcher2013levy}
B.~B{\"o}ttcher, R.~Schilling, and J.~Wang.
\newblock L{\'e}vy matters. iii.
\newblock {\em Lecture Notes in Mathematics}, 2099, 2013.

\bibitem{bouchet2014langevin}
F.~Bouchet, J.~Laurie, and O.~Zaboronski.
\newblock Langevin dynamics, large deviations and instantons for the
  quasi-geostrophic model and two-dimensional {E}uler equations.
\newblock {\em Journal of Statistical Physics}, 156(6):1066--1092, 2014.

\bibitem{brzezniak2014strong}
Z.~Brze{\'z}niak, W.~Liu, and J.~Zhu.
\newblock Strong solutions for {SPDE} with locally monotone coefficients driven
  by {L}{\'e}vy noise.
\newblock {\em Nonlinear Analysis: Real World Applications}, 17:283--310, 2014.

\bibitem{capitaine1995onsager}
M.~Capitaine.
\newblock Onsager-{M}achlup functional for some smooth norms on {W}iener space.
\newblock {\em Probability theory and related fields}, 102(2):189--201, 1995.

\bibitem{capitaine2000onsager}
M.~Capitaine.
\newblock On the {O}nsager-{M}achlup functional for elliptic diffusion
  processes.
\newblock In {\em S{\'e}minaire de Probabilit{\'e}s XXXIV}, pages 313--328.
  Springer, 2000.

\bibitem{chao2019onsager}
Y.~Chao and J.~Duan.
\newblock The {O}nsager-{M}achlup function as {L}agrangian for the most
  probable path of a jump-diffusion process.
\newblock {\em Nonlinearity}, 32(10):3715, 2019.

\bibitem{cont1975financial}
R.~Cont and P.~Tankov.
\newblock Financial modelling with jump processes, 2004.
\newblock {\em Chapman \&amp}, 1975.

\bibitem{cooper2016auxiliary}
F.~Cooper and J.~F. Dawson.
\newblock Auxiliary field loop expansion of the effective action for a class of
  stochastic partial differential equations.
\newblock {\em Annals of Physics}, 365:118--154, 2016.

\bibitem{da2014stochastic}
G.~Da~Prato and J.~Zabczyk.
\newblock {\em Stochastic equations in infinite dimensions}.
\newblock Cambridge University Press, 2014.

\bibitem{dembo1991onsager}
A.~Dembo and O.~Zeitouni.
\newblock Onsager-{M}achlup functionals and maximum a posteriori estimation for
  a class of non-{G}aussian random fields.
\newblock {\em Journal of multivariate analysis}, 36(2):243--262, 1991.

\bibitem{ditlevsen1999observation}
P.~D. Ditlevsen.
\newblock Observation of $\alpha$-stable noise induced millennial climate
  changes from an ice-core record.
\newblock {\em Geophysical Research Letters}, 26(10):1441--1444, 1999.

\bibitem{duan2015introduction}
J.~Duan.
\newblock {\em An introduction to stochastic dynamics}.
\newblock Cambridge University Press, 2015.

\bibitem{duan2014effective}
J.~Duan and W.~Wei.
\newblock {\em Effective dynamics of stochastic partial differential
  equations}.
\newblock Elsevier, 2014.

\bibitem{durr1978onsager}
D.~D{\"u}rr and A.~Bach.
\newblock The {O}nsager-{M}achlup function as {L}agrangian for the most
  probable path of a diffusion process.
\newblock {\em Communications in Mathematical Physics}, 60(2):153--170, 1978.

\bibitem{freidlin2012random}
M.~I. Freidlin and A.~D. Wentzell.
\newblock Random perturbations of {H}amiltonian systems.
\newblock In {\em Random Perturbations of Dynamical Systems}, pages 258--354.
  Springer, 2012.

\bibitem{fujita1982onsager}
T.~Fujita and S.-i. Kotani.
\newblock The {O}nsager-{M}achlup function for diffusion processes.
\newblock {\em Journal of Mathematics of Kyoto University}, 22(1):115--130,
  1982.

\bibitem{hao2014asymmetric}
M.~Hao, J.~Duan, R.~Song, and W.~Xu.
\newblock Asymmetric non-{G}aussian effects in a tumor growth model with
  immunization.
\newblock {\em Applied Mathematical Modelling}, 38(17-18):4428--4444, 2014.

\bibitem{hara2016stochastic}
K.~Hara and Y.~Takahashi.
\newblock Stochastic analysis in a tubular neighborhood or {O}nsager-{M}achlup
  functions revisited.
\newblock {\em arXiv preprint arXiv:1610.06670}, 2016.

\bibitem{hochberg1999effective}
D.~Hochberg, C.~Molina-Paris, J.~Perez-Mercader, and M.~Visser.
\newblock Effective action for stochastic partial differential equations.
\newblock {\em Physical Review E}, 60(6):6343, 1999.

\bibitem{huang2020estimating}
Y.~Huang, Y.~Chao, W.~Wei, and J.~Duan.
\newblock Estimating the most probable transition time for stochastic dynamical
  systems.
\newblock {\em arXiv:2006.10979}, 2020.

\bibitem{ikeda2014stochastic}
N.~Ikeda and S.~Watanabe.
\newblock {\em Stochastic differential equations and diffusion processes}.
\newblock Elsevier, 2014.

\bibitem{imkeller2012stochastic}
P.~Imkeller and J.-S. Von~Storch.
\newblock {\em Stochastic climate models}, volume~49.
\newblock Birkh{\"a}user, 2012.

\bibitem{jacod2013limit}
J.~Jacod and A.~N. Shiryaev.
\newblock {\em Limit theorems for stochastic processes}.
\newblock Springer Science \& Business Media, 2013.

\bibitem{jourdain2012levy}
B.~Jourdain, S.~M{\'e}l{\'e}ard, and W.~A. Woyczynski.
\newblock L{\'e}vy flights in evolutionary ecology.
\newblock {\em Journal of mathematical biology}, 65(4):677--707, 2012.

\bibitem{kriegl1997convenient}
A.~Kriegl and P.~W. Michor.
\newblock {\em The convenient setting of global analysis}, volume~53.
\newblock American Mathematical Soc., 1997.

\bibitem{lang2012fundamentals}
S.~Lang.
\newblock {\em Fundamentals of differential geometry}, volume 191.
\newblock Springer Science \& Business Media, 2012.

\bibitem{li2020machine}
Y.~Li, J.~Duan, and X.~Liu.
\newblock A machine learning framework for computing the most probable paths of
  stochastic dynamical systems.
\newblock {\em arXiv:2010.04114}, 2020.

\bibitem{metivier1982semimartingales}
M.~Metivier.
\newblock Semimartingales: A course on stochastic processes (de gruyter, berlin
  \& new york).
\newblock 1982.

\bibitem{onsager1953fluctuations}
L.~Onsager and S.~Machlup.
\newblock Fluctuations and irreversible processes.
\newblock {\em Physical Review}, 91(6):1505, 1953.

\bibitem{peszat2007stochastic}
S.~Peszat and J.~Zabczyk.
\newblock {\em Stochastic partial differential equations with {L}{\'e}vy noise:
  {A}n evolution equation approach}.
\newblock Cambridge University Press, 2007.

\bibitem{qiao2017path}
H.~Qiao and J.~Wu.
\newblock On the path-independence of the {G}irsanov transformation for
  stochastic evolution equations with jumps in {H}ilbert spaces.
\newblock {\em arXiv:1707.07828}, 2017.

\bibitem{qiu2000kuroshio}
B.~Qiu and W.~Miao.
\newblock Kuroshio path variations south of {J}apan: {B}imodality as a
  self-sustained internal oscillation.
\newblock {\em Journal of Physical Oceanography}, 30(8):2124--2137, 2000.

\bibitem{ren2020identifying}
J.~Ren and J.~Duan.
\newblock Identifying stochastic governing equations from data of the most
  probable transition trajectories.
\newblock {\em arXiv:2002.10251}, 2020.

\bibitem{shepp1992note}
L.~A. Shepp and O.~Zeitouni.
\newblock A note on conditional exponential moments and {O}nsager-{M}achlup
  functionals.
\newblock {\em The Annals of Probability}, pages 652--654, 1992.

\bibitem{sun2019pathwise}
X.~Sun, L.~Xie, and Y.~Xie.
\newblock Pathwise uniqueness for a class of {SPDE}s driven by cylindrical
  $\alpha$-stable processes.
\newblock {\em Potential Analysis}, pages 1--17, 2019.

\bibitem{takahashi1981probability}
Y.~Takahashi and S.~Watanabe.
\newblock The probability functionals ({O}nsager-{M}achlup functions) of
  diffusion processes.
\newblock In {\em Stochastic Integrals}, pages 433--463. Springer, 1981.

\bibitem{wolf1993onsager}
E.~M. Wolf and O.~Zeitouni.
\newblock Onsager-{M}achlup functionals for non trace class {SPDE}'s.
\newblock {\em Probability theory and related fields}, 95(2):199--216, 1993.

\bibitem{zhang2019transition}
X.~Zhang, B.~Yang, C.~Wei, and M.~Luo.
\newblock The transition of energy and bound states in the continuum of
  fractional {S}chr{\"o}dinger equation in gravitational field and the effect
  of the minimal length.
\newblock {\em Communications in Nonlinear Science and Numerical Simulation},
  67:290--302, 2019.

\bibitem{zheng2016transitions}
Y.~Zheng, L.~Serdukova, J.~Duan, and J.~Kurths.
\newblock Transitions in a genetic transcriptional regulatory system under
  {L}{\'e}vy motion.
\newblock {\em Scientific reports}, 6:29274, 2016.

\bibitem{zheng2020maximum}
Y.~Zheng, F.~Yang, J.~Duan, X.~Sun, L.~Fu, and J.~Kurths.
\newblock The maximum likelihood climate change for global warming under the
  influence of greenhouse effect and {L}{\'e}vy noise.
\newblock {\em Chaos: An Interdisciplinary Journal of Nonlinear Science},
  30(1):013132, 2020.

\end{thebibliography}

\end{document}